\pgfplotsset{articleplot/.style={
    axis x line=bottom,
    axis y line=left,
    x axis line style={-}
  }}
\newcommand{\Nb}{\mathbb{N}}
\newcommand{\drm}{\mathrm{d}}
\newtheorem{prop}{Proposition}
\newtheorem{coro}{Corollary}
\newtheorem{lemma}{Lemma}
\newtheorem{theorem}{Theorem}
\newcommand{\old}[1]{{}}
\begin{document}

\title{On the Number of Non-zero Elements of Joint Degree Vectors}

\author[sc]{\'Eva Czabarka}\ead{czabarka@math.sc.edu}
\author[luh,mpi]{Johannes Rauh}\ead{jrauh@mis.mpg.de}
\author[uk]{Kayvan Sadeghi}\ead{k.sadeghi@statslab.cam.ac.uk}
\author[gv]{Taylor Short}\ead{shorttay@gvsu.edu}
\author[sc]{L\'aszl\'o Sz\'ekely}\ead{szekely@math.sc.edu}
\address[sc]{University of South Carolina, Columbia}
\address[luh]{Leibniz University of Hannover}
\address[mpi]{Max Planck Institute for Mathematics in the Sciences}
\address[uk]{University of Cambridge}
\address[gv]{Grand Valley State University}

\begin{keyword}
degree sequence, joint degree distribution, joint degree vector, joint degree matrix, bidegree distribution, exponential random graph model
\end{keyword}

\begin{abstract}
Joint degree vectors give the number of edges between vertices of degree $i$ and degree $j$ for $1\le i\le j\le n-1$
  in an $n$-vertex graph.  We find lower and upper bounds for the maximum number of nonzero elements in a joint degree
  vector as a function of~$n$.  This provides an upper bound on the number of estimable parameters in the exponential random graph model
 with bidegree-distribution as its sufficient statistics.
\end{abstract}

\maketitle

\section{Introduction}

\emph{Degree sequences} and \emph{degree distributions} have been subjects of study in graph theory and many other fields in the past decades.
In particular, in social network analysis, they have been shown to possess a great expressive power in representing and statistically modeling networks; see, e.g., \citet{new03} and \citet{han07}. Generally in this context, models are in exponential family form \citep{bar78,bro86}, known as \emph{exponential random graph models} (ERGMs) \citep{fra91,was96}. When the sufficient statistic, i.e.\ the only information that the ERGM gathers from an observed network, is the degree sequence of a network, the corresponding
ERGM is known as the \emph{beta-model}, properties of which have been extensively studied in the recent literature; see \citet{bli10},  \citet{cha11}, and \citet{rin13}.
Degree distributions have also been used as sufficient statistics; see \citet{sadr14}.

The \emph{bidegree distribution} generalizes the degree distribution and collects the relative frequencies of the degree
combinations that appear at neighbouring vertices.  
The non-normalized version of the bidegree distribution is called the \emph{joint degree vector} (JDV, sometimes also called \emph{joint degree matrix}) \citep{pat76,ama08,sta12},
i.e.\ the elements of the JDV represent the exact counts
of edges between pairs of vertices of specified degree.  Conditions for a given vector to be the JDV of a graph were provided in
\citet{pat76},  \citet{sta12}, and
 \citet{cza15}.
An ERGM with bidegree distribution as sufficient statistics has been formalized in \citet{sadr14}. 

Bidegree distributions are network statistics that belong to the more general class of joint degree distributions that count degree
combinations of connected sets of vertices of given size.
In the computer science literature, the family of graphs with a given joint degree distribution is called \emph{$d$K-graphs}, where $d$ indicates the number of vertices of the concerned subgraphs~\citep{mah06}. The class of $d$K-graphs was originally formulated as a means to capture increasingly refined properties of
networks in a hierarchical manner based on higher order interactions among vertex degrees (see, e.g., \citet{dim09}).



One important statistical problem when working with ERGMs (or other exponential families) is the question whether the \emph{maximum likelihood estimate} (MLE) exists for a given set of observations (in network theory, the observations most often consist just of a single observed network).
When the MLE does not exist, one or more of the model parameters cannot be estimated.
As is well-known, the information when the MLE exists can be obtained from a
facet description of the so called \emph{model polytope} \citep{bar78}.  These facets correspond to linear inequalities
that hold among the different components of the sufficient statistics.  When such a description is known, it is also
easy to understand which parameters can be estimated~\citep{WangRauhMassam16:Approximating_faces}.

Even though a complete description of the model polytope is hard to compute in general, it is often possible to obtain a subset of the valid inequalities.  Such partial information about the model polytope gives partial information about MLE existence and parameter behaviour~\citep{WangRauhMassam16:Approximating_faces}.  For example, the bidegree counts (and frequencies) are always non-negative.
\citet{sadr14} exploited these facts to show that the MLE never exists for a single observed network in the case of bidegree distribution as sufficient statistics.  More importantly, parameters corresponding to zeros on the bidegree vector of the observed network are not estimable.

These results motivates us to find the maximum possible number of non-zero elements on the bidegree vector of a graph.
This maximum number also tells us about the maximum number of estimable parameters.  In this paper, we prove that,
asymptotically for large~$n$, the maximum number of non-zero elements lies between $0.5\binom{n}{2}$ and
$\frac{13}{24}\binom{n}{2}\approx0.541\bar{6}\binom{n}{2}$.  Thus, roughly half of the components are zero, and so, at
most half of the parameters are estimable from a single observation.


In the next section, we provide basic graph theoretical as well as statistical definitions and preliminary results.  In Section~\ref{sec:3}, we provide a lower bound for the maximum possible number of non-zero elements of a JDV by constructing a family of graphs that reaches this bound. In Section~\ref{sec:40}, we exploit conditions from \citet{cza15} and use two different approaches to obtain upper bounds for this desired value. The first upper bound is presented in Theorem~\ref{thm:1} and the second bound in Theorem~\ref{mainthm}. As shall be seen, the numerical values for the two bounds are very close.

\section{Definitions and preliminary results}\label{sec:2}

\subsection{Joint degree vectors}
In this paper we only consider simple graphs without isolated vertices. Let $G=(V,E)$ be such an $n$-vertex graph and for $1\le i \le n-1$ let $V_i$ be the set of vertices of degree $i$. The \emph{joint degree vector} (JDV) of $G$ is the vector ${\bf s(G)} =(j_{11}(G),j_{12}(G),\ldots,j_{n-1,n-1}(G))$ of length $\binom{n}{2}$ with components defined by $j_{ik} = |\{ xy \in E(G) : x \in V_i, y \in V_k \}|$ for all $1\le i\le k\le n-1$.  For some vector ${\bf m}$, if there exists a graph $G$ with ${\bf s(G)} = {\bf m}$, then $\mathbf{m}$ is called a \emph{graphical JDV}. Note that the degree sequence of a graph is determined by its JDV in that
$$
|V_i|=\frac{1}{i}\left(\sum_{k=1}^i j_{ki}+\sum_{k=i}^{n-1} j_{ik}\right).
$$
The following characterization for a vector ${\bf m}$ with integer entries to be a graphical JDV is proved by \citet{pat76}, \citet{sta12}, and \citet{cza15}. As it provides simple necesssary and sufficient conditions for a vector to be realized as a graphical JDV, we call the result an Erd\"{o}s-Gallai type theorem.

\begin{prop}(Erd\H{o}s-Gallai type theorem for a JDV) \label{EGT}
An integer vector ${\bf m}=(m_{11},m_{12}, \ldots, m_{n-1,n-1})$ of size $\binom{n}{2}$ is a graphical JDV if and only if the following holds:
\begin{enumerate}
\item[(i)] for all $i$: $\displaystyle n_i := \frac{1}{i} \left( \sum_{k=1}^{i} m_{ik}+\sum_{k=i}^{n-1}m_{ik} \right)$ is an integer,
\item[(ii)] for all $i$: $\displaystyle m_{ii} \le \binom{n_i}{2}$,
\item[(iii)] for all $i < k$: $\displaystyle m_{ik} \le n_in_k$.
\end{enumerate}
Moreover, $n_i$ gives the number of vertices of degree $i$ in the graph $G$.
\end{prop}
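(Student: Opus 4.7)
The necessity of all three conditions is a direct consequence of the definitions. For (i), every edge incident to some vertex in $V_i$ contributes once to the sum $\sum_{k \le i} m_{ki} + \sum_{k \ge i} m_{ik}$ if exactly one endpoint lies in $V_i$, and twice if both endpoints do; hence this combined sum equals $\sum_{v \in V_i} \deg(v) = i \cdot n_i$, which rearranges to (i). For (ii) and (iii), $m_{ii}$ is the edge count of the simple graph induced on $V_i$ (at most $\binom{n_i}{2}$) and $m_{ik}$ is the edge count of a simple bipartite graph on parts $V_i$, $V_k$ (at most $n_i n_k$).

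For sufficiency I would fix pairwise disjoint vertex sets $V_i$ of size $n_i$ (well-defined by (i)) and assemble a realizing graph piece by piece. The strategy is to first choose an auxiliary \emph{profile} $(d_{v,k})$ in which $d_{v,k}$ is to be the number of neighbors of $v \in V_i$ lying in $V_k$. This profile should satisfy the per-vertex totals $\sum_k d_{v,k} = i$ for every $v \in V_i$, the per-pair totals $\sum_{v \in V_i} d_{v,k} = m_{ik}$ for $k \ne i$ and $\sum_{v \in V_i} d_{v,i} = 2m_{ii}$, together with the capacity bounds $d_{v,k} \le n_k$ and $d_{v,i} \le n_i - 1$. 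Existence of an integer profile reduces to the feasibility of a transportation problem whose supply-and-demand constraints are reconciled exactly by condition (i) and whose capacity bounds are ensured by (ii) and (iii); integrality of the transportation polytope then yields an integer-valued profile.

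Given a valid profile, the graph is realized in two kinds of pieces. For each unordered pair $i < k$, one builds a bipartite subgraph between $V_i$ and $V_k$ with one-sided degree sequences $(d_{v,k})_{v \in V_i}$ and $(d_{w,i})_{w \in V_k}$; since the profile can be taken nearly constant in each row, the Gale--Ryser inequalities collapse to $m_{ik} \le n_i n_k$, which is (iii). For each $i$, one builds an intra-class graph on $V_i$ realizing the sequence $(d_{v,i})_{v \in V_i}$; the Erd\H{o}s--Gallai conditions reduce to $m_{ii} \le \binom{n_i}{2}$ together with $d_{v,i} \le n_i - 1$, which are consequences of (ii). Taking the edge-union of all these pieces produces a graph with the prescribed JDV.

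The main obstacle is the profile step: balancing per-vertex totals equal to $i$ against per-pair totals equal to $m_{ik}$ without breaching the capacities. A naive row-by-row uniform choice leaves column sums off by $O(1)$, so the rounding has to be coordinated across classes. The cleanest way is to phrase it as an integer flow problem and appeal to the total unimodularity of transportation constraint matrices; alternatively, a constructive greedy sweep that distributes the residuals one class at a time works, with (ii) and (iii) invoked to show that feasibility is preserved at every step. Once the profile exists, the remaining realizability steps are routine.
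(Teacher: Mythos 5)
First, note that the paper does not actually prove Proposition~\ref{EGT}; it quotes the result from \citet{pat76}, \citet{sta12} and \citet{cza15}, so there is no in-paper proof to compare against. Your necessity argument is correct and standard. Your sufficiency outline follows the same general architecture as the cited references (choose, for each vertex $v\in V_i$ and each class $k$, a target number $d_{v,k}$ of neighbours in $V_k$, then realize each class pair separately), but it has a genuine gap at its central step. Your realization step only works because the profile is ``nearly constant in each row'': for an \emph{arbitrary} integer profile with the correct marginals and capacities, the Gale--Ryser and Erd\H{o}s--Gallai conditions do \emph{not} collapse to (ii) and (iii). For instance, with $n_i=n_k=2$ and $m_{ik}=2$, the one-sided sequences $(2,0)$ and $(0,2)$ satisfy all your marginal and capacity constraints but admit no bipartite realization; likewise, with $n_i=3$ and $m_{ii}=2$, the intra-class sequence $(2,2,0)$ has even sum $2m_{ii}$ and entries at most $n_i-1$ but is not graphical. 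However, the device you invoke to produce the profile --- feasibility of a capacitated transportation problem plus total unimodularity --- only delivers \emph{some} integer profile; it says nothing about balancedness. So the two halves of your argument do not connect: the profile you can construct is not known to be a profile you can realize.

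The missing lemma, which is the real content of the theorem in the cited sources, is that a profile exists which simultaneously has exact per-vertex totals $i$, exact class-pair totals $m_{ik}$ (resp.\ $2m_{ii}$), respects the capacities, and is balanced in the sense that $|d_{v,k}-d_{v',k}|\le 1$ for all $v,v'\in V_i$ and all $k$. One way to obtain it from your setup: start from any integer solution of the transportation problem (your feasibility check via (i)--(iii) is fine there), and whenever $d_{v,k}\ge d_{v',k}+2$ for some $v,v'\in V_i$, pick a class $k'$ with $d_{v,k'}<d_{v',k'}$ (one exists because both rows sum to $i$) and perform the exchange that decreases $d_{v,k}$ and $d_{v',k'}$ by one and increases $d_{v',k}$ and $d_{v,k'}$ by one; this preserves all marginals and capacities and strictly decreases $\sum_{v,k}d_{v,k}^{2}$, so it terminates at a balanced profile. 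With that lemma in place, verifying that semi-regular bipartite degree pairs satisfy Gale--Ryser under (iii), and that near-regular sequences with even sum $2m_{ii}\le n_i(n_i-1)$ are graphical, is indeed routine. (A separate, minor point concerning the statement rather than your proof: your construction yields a graph on $\sum_i n_i$ vertices, so sufficiency as literally stated also requires $\sum_i n_i\le n$, which does not follow from (i)--(iii); consider $n=3$ with $m_{11}=3$ and all other entries zero.)
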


 \subsection{Exponential random graph models}

 An \emph{exponential random graph model} (ERGM) is a family of random graphs, parametrized by finitely many parameters $\theta_{i}$, $i\in I$.  All random graphs have the same (finite) set of vertices, denoted by~$V$.  Under this model, the probability of observing a network $G$ with vertex set $V$ can be written as
 \begin{equation}\label{eq:ergm}
 P(G)=\exp\{\sum_{i\in I}t_i(G)\theta_i-\psi(\theta)\},
 \end{equation}
 where $t_i(G)$ are \emph{canonical sufficient statistics}, which capture some important feature of $G$, and $\psi(\theta)$ is the \emph{normalizing constant}, which ensures that probabilities add to $1$ when summing over all possible networks.

 The model is in exponential family form. Hence, the likelihood function $l(\theta)= P(G_1,\dots,G_m)$, for generic observed networks $G_1,\dots,G_m$, is concave and, therefore, has a unique maximum if it exists. Existence of this maximum can be described geometrically:

 Suppose that the networks $G_1,\dots,G_m$ were observed. The \emph{average observed sufficient statistic} $\bar{t}$ is
 $\bar{t}_i=\frac{1}{m}\sum_{j=1}^m t_i(G_j)$, $1\leq i\leq d$. We also define \emph{the model polytope} to be the convex hull of all the points in a $d$-dimensional space that correspond to the sufficient statistics of all graphs with $n$ vertices. We then have the following
 result \citep{bar78,bro86}:
 \begin{prop}
 For an ERGM, the MLE exists if and only if the average observed sufficient statistic $\bar{t}$ lies in the (relative) interior of the model polytope.
 \end{prop}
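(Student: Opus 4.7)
The plan is to invoke the standard convex-analytic picture for discrete exponential families, since the ERGM in \eqref{eq:ergm} is exactly such a family with finite support (the set of graphs on $V$). The key observation is that the log-likelihood
\[
\ell(\theta)=\sum_{i\in I}\bar{t}_i\,\theta_i-\psi(\theta)
\]
is concave in $\theta$ because $\psi$ is convex (it is the log-Laplace transform of the counting measure on the sample space of graphs), and its gradient is $\nabla\ell(\theta)=\bar{t}-E_\theta[t(G)]$. Thus any critical point is a global maximizer, and the MLE exists if and only if the \emph{moment equation} $E_\theta[t(G)]=\bar{t}$ is solvable in~$\theta$.

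The second step is to identify the range of the moment map $\theta\mapsto E_\theta[t(G)]$. Let $P$ denote the model polytope, i.e.\ the convex hull of $\{t(G):G\text{ a graph on }V\}$. For any finite $\theta$, the distribution $P_\theta$ assigns strictly positive weight to every graph, so $E_\theta[t(G)]$ is a strictly positive convex combination of the vertices of $P$, which forces $E_\theta[t(G)]$ to lie in the relative interior $\mathrm{relint}(P)$. Conversely, one shows that every point of $\mathrm{relint}(P)$ is attained: this is the classical result that for a regular exponential family the moment map is a real-analytic bijection between the natural parameter space (modulo the lineality of the sufficient statistic) and $\mathrm{relint}(P)$. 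Combining both directions, $\bar{t}\in\mathrm{relint}(P)$ is exactly the condition for the moment equation to admit a solution, i.e.\ for the MLE to exist.

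The forward direction (MLE exists $\Rightarrow$ $\bar{t}\in\mathrm{relint}(P)$) is the easy half and follows directly from the positivity of $P_\theta$. The main obstacle is the reverse direction (surjectivity of the moment map onto $\mathrm{relint}(P)$). The standard way around it is a coercivity/compactness argument: on a sequence $\theta^{(n)}$ along which $\ell$ approaches its supremum, either $\theta^{(n)}$ stays bounded, in which case one extracts a convergent subsequence and obtains the MLE by continuity of $\nabla\ell$, or $\|\theta^{(n)}\|\to\infty$, in which case one passes to a limiting direction $v$ and shows that $\bar{t}$ must lie in a proper face of $P$ (the face corresponding to graphs maximizing $\langle v,t(G)\rangle$), contradicting $\bar{t}\in\mathrm{relint}(P)$. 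Uniqueness is then immediate from strict concavity of $\ell$ on the quotient by the lineality space, and since these facts are exactly the content of \citet{bar78} and \citet{bro86}, the proof reduces to citing them after verifying that the ERGM \eqref{eq:ergm} fits into their framework.
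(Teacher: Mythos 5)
Your proposal is correct and matches the paper's treatment: the paper gives no proof of this proposition, simply attributing it to the classical theory of exponential families in \citet{bar78} and \citet{bro86}, which is exactly where your argument ends up after the (accurate) standard sketch of concavity, the moment equation, and the characterization of the moment map's range as the relative interior of the convex support. The only substantive content beyond the citation is checking that the ERGM is a finite-support exponential family so that these results apply, which you do.
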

In network analysis, there is usually only one network $G$ observed, and therefore, the average 
 observed sufficient statistic is simply $t(G)$.

 In the so-called $2$K-model, the sufficient statistic $t(G)$ in (\ref{eq:ergm}) is the JDV~${\bf s(G)}$.  As shown in \citep{sadr14}, if $s_i(G)=0$, then $\theta_i$ is not estimable. It is also easy to observe that for every graph, there are always some elements of the bidegree vector that are zero. In the next sections, we investigate how many elements of the bidegree vector are always zero.

\section{Lower bound construction}\label{sec:3}
Let $H_n$ denote an $n$-vertex graph with vertex set $V(H_n) = \{v_1, v_2, \ldots, v_n\}$ and edge set $E(H_n) = \{v_iv_j : i+j > n \text{ and } i\neq j\}$. This graph, which is known as the \emph{half graph}, has degree sequence $n-1, n-2, \ldots, \left \lfloor \frac{n}{2} \right \rfloor, \left \lfloor \frac{n}{2} \right \rfloor,  \ldots, 2, 1$. Since a graph on $n$ vertices cannot contain both vertices with degrees $0$ and~$n-1$, the half graph attains the maximum number of distinct degrees.

For any graph $G$, let
\begin{equation*}
A(G) = \{ ik : j_{ik}\neq 0\}
\end{equation*}
be the set of non-zero components in the JDV of~$G$. Clearly, $|A(H_n)| = \frac{n^2}{4}$ if $n$ is even and $|A(H_n)| = \frac{n^2-1}{4}$ if $n$ is odd. Hence,
$$
\lim _{n\rightarrow \infty} \frac{|A(H_n)|}{\binom{n}{2}} = \frac{1}{2},
$$
so about half the elements of the JDV of the half graph are non-zero.

The half graphs are not optimal, and there are constructions which achieve a higher number of non-zero elements in the JDV.  Consider the graph $H_n$ with $n\geq 7$ odd.  If one connects the degree $1$ vertex to one of the vertices with degree $(n-1)/2$, the JDV element $j_{1,n-1}$ becomes $0$, but the elements $j_{2,(n+1)/2}$ and $j_{(n+1)/2,(n+1)/2}$ become nonzero, so the new graph has one more nonzero elements in its JDV.  We found even better such constructions, but all of these only improve $|A(H_n)|$ by a term that is linear in~$n$.

\section{Two upper bounds}\label{sec:40}

In this section, we provide two upper bounds that provide numerically very close upper bounds, but use entirely different methods.
Although we tried, we were unable to combine these two proof techniques. We think that it is instructive to show both of them.

\subsection{Continuous optimization}\label{sec:4}

The following identity is a simple consequence of Proposition~\ref{EGT} and is due to \citet{sadr14}:
\begin{prop}\label{prop:8}
For any graph $G$,
\begin{equation}\label{eq:13}
  \sum_{(k_1,k_{2})\in A(G)} \frac{k_1+k_2}{k_1k_2}j_{k_1k_2}(G)=n-n_0(G),
\end{equation}
where $n_0(G)$ is the number of isolated vertices in $G$.
\end{prop}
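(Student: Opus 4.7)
The plan is to prove the identity directly by double counting, starting from the right-hand side. The key observation is that for any non-isolated vertex $v$ in $G$, if $d_v$ denotes its degree, then trivially $\sum_{u\sim v} \frac{1}{d_v} = 1$. Summing this equation over all non-isolated vertices yields
\begin{equation*}
  n - n_0(G) \;=\; \sum_{v\,:\,d_v\geq 1}\sum_{u\sim v}\frac{1}{d_v}.
\end{equation*}
So the task reduces to showing that the right-hand side here equals the left-hand side of~\eqref{eq:13}.

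Next I would swap the order of summation: each edge $uv\in E(G)$ is scanned once from the endpoint $v$, contributing $1/d_v$, and once from the endpoint $u$, contributing $1/d_u$. Thus
\begin{equation*}
  \sum_{v\,:\,d_v\geq 1}\sum_{u\sim v}\frac{1}{d_v} \;=\; \sum_{uv\in E(G)} \left(\frac{1}{d_u}+\frac{1}{d_v}\right).
\end{equation*}
Now I would group the edges by the (unordered) pair of degrees at their endpoints: the number of edges $uv$ with $\{d_u,d_v\}=\{k_1,k_2\}$, $k_1\leq k_2$, is exactly $j_{k_1k_2}(G)$ by definition of the JDV, and each such edge contributes $\frac{1}{k_1}+\frac{1}{k_2}=\frac{k_1+k_2}{k_1k_2}$. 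This gives
\begin{equation*}
  n - n_0(G) \;=\; \sum_{1\le k_1\le k_2\le n-1} \frac{k_1+k_2}{k_1k_2}\,j_{k_1k_2}(G),
\end{equation*}
and pairs with $j_{k_1k_2}(G)=0$ can be dropped from the sum, leaving exactly $(k_1,k_2)\in A(G)$, which matches \eqref{eq:13}.

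I do not expect any real obstacle: the proof is a one-line double-counting argument, and the only subtlety is the usual bookkeeping to make sure that loops (which do not occur in simple graphs) and diagonal terms $k_1=k_2$ are handled consistently, which they are, since $\frac{k+k}{k\cdot k}=\frac{2}{k}$ correctly encodes the fact that both endpoints of a same-degree edge contribute $\frac{1}{k}$. Alternatively, one could derive the identity directly from Proposition~\ref{EGT}(i) by summing $i\cdot n_i$ in a different way, or by summing $n_i$ and carefully recording the coefficient with which each $j_{k_1k_2}$ appears; this latter approach essentially reproduces the same double-count but phrased in terms of the vector entries rather than edges.
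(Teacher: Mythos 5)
Your proof is correct and is essentially the same double-counting argument as the paper's: the paper writes $n-n_0(G)=\sum_i n_i(G)$ and expands each $n_i$ via Proposition~\ref{EGT}(i), which is exactly your identity $\sum_{u\sim v}1/d_v=1$ summed over non-isolated vertices and regrouped by edges. The only difference is presentational (you organize the count from the edge side, the paper from the degree-class side), and your handling of the diagonal terms $k_1=k_2$ matches the paper's.
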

To see this, by Proposition~\ref{EGT}(i) we have
\begin{align*}
n-n_0(G)&=\sum_{i=1}^{n-1} n_i(G)=\sum_{i=1}^{n-1}\frac{1}{i}\left(\sum_{k=1}^i j_{k i}(G)+\sum_{k=i}^{n-1}j_{ik}(G) \right)\\
&=\sum_{(k_1,k_2)\in A(G)}\left(\frac{1}{k_1}+\frac{1}{k_2}\right)j_{k_1k_2}(G)=\sum_{(k_1,k_2)\in A(G)}\frac{k_1+k_2}{k_1k_2}j_{k_1k_2}(G)
\end{align*}

Next, we show that we can assume that $n_{0}(G) = 0$ without loss of generality.
Consider a graph $G$ with $n_0(G)>0$. If $n_0(G)=1$, then let $v\in V(G)$ be a largest degree vertex in $G$ and $x$ be the isolated vertex, and let $G'$ be the graph obtained by adding the edge $xv$ to
the graph $G$. If $n_0(G)>1$ then let $G'$ be the graph obtained by adding the edges between the isolated vertices of~$G$.  In both cases $G'$ is a graph on the same number of vertices as $G$ that
has at least as many nonzero entries in its JDV as $G$ does.
Thus, there are graphs without isolated vertices that have the maximum number of nonzero entries in their JDV.

\begin{coro}
  For any graph $G$,
    \begin{equation*}
    \sum_{(k_{1},k_{2})\in A(G)}\frac{k_{1}+k_{2}}{k_{1}k_{2}} \le \sum_{(k_1,k_{2})\in A(G)}\frac{k_1+k_2}{k_1k_2}j_{k_1k_2}(G) \le n. 
  \end{equation*}
\end{coro}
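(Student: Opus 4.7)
The corollary is a direct consequence of Proposition~\ref{prop:8} combined with the definition of~$A(G)$, so the proof should be essentially a two-line observation. My plan is to handle each inequality separately.

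For the right-hand inequality, I would simply invoke Proposition~\ref{prop:8}, which gives the exact equality
\begin{equation*}
  \sum_{(k_1,k_2)\in A(G)}\frac{k_1+k_2}{k_1k_2}j_{k_1k_2}(G)=n-n_0(G).
\end{equation*}
Since $n_0(G)\ge 0$, the right-hand side is bounded above by~$n$, which is exactly what is required. No further estimation is needed.

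For the left-hand inequality, I would use the defining property of the set $A(G)$. By definition, $(k_1,k_2)\in A(G)$ means $j_{k_1k_2}(G)\neq 0$, and since the JDV has nonnegative integer entries, this forces $j_{k_1k_2}(G)\ge 1$. Because the coefficients $\frac{k_1+k_2}{k_1k_2}$ are positive, multiplying termwise by $j_{k_1k_2}(G)\ge 1$ only increases each summand, hence
\begin{equation*}
  \sum_{(k_1,k_2)\in A(G)}\frac{k_1+k_2}{k_1k_2}\le\sum_{(k_1,k_2)\in A(G)}\frac{k_1+k_2}{k_1k_2}j_{k_1k_2}(G).
\end{equation*}
Combining the two bounds yields the chain of inequalities in the statement. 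There is no real obstacle here; the only subtlety to keep in mind is that the sums are restricted to the support~$A(G)$ on both sides, which is precisely what makes the $j_{k_1k_2}(G)\ge 1$ lower bound valid termwise. In subsequent sections the left inequality will presumably be used as the starting point for bounding $|A(G)|$ from above, so writing it in this form is the natural setup.
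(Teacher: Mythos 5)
Your proof is correct and matches the paper's (implicit) reasoning: the right-hand inequality is Proposition~\ref{prop:8} together with $n_0(G)\ge 0$, and the left-hand inequality is the termwise bound $j_{k_1k_2}(G)\ge 1$ on the support $A(G)$. The paper leaves the corollary without an explicit proof, but your two-line argument is exactly the intended one.
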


The original problem of finding the maximum possible number of non-zero elements of a JDV for a fixed number of vertices
can be formulated as the following optimization problem:
\begin{itemize}
\item \emph{ Maximize $|A(G)|$ among all graphs~$G$ with $n$ vertices.}
\end{itemize}
Using the corollary, we relax this optimization problem and study the following problem, which we will refer to as the
\emph{discrete relaxation} (as ultimately we will solve its continuous version):
\begin{itemize}
\item
  \emph{Maximize the cardinality $|A|$ among all subsets $A \subseteq P_{n}:= \{(i,j)\in\Nb^{2} : 1\le i\le j\le n-1\}$
    under the constraint $\sum_{(k_{1},k_{2})\in A} \frac{k_{1}+k_{2}}{k_{1}k_{2}} \le n$.}
\end{itemize}
By the above corollary,  for any~$n$, the cardinality of a subset that solves the discrete relaxation is an upper bound for the original optimization problem.

The discrete relaxation can be solved on a computer as follows:
First, compute all values $(k_1+k_2)/(k_1k_2)$ on~$P_{n}$.  Second, order the values.  Third, start adding them up as
long as the sum does not exceed~$n$.  Finally, count the number of elements that have been added.
Let $\alpha_{n}$ be the cardinality of a solution $A$ of the discrete relaxation divided by~$\binom{n}{2}$, the cardinality of~$P_{n}$.
The values of $\alpha_{n}$ are plotted in Figure~\ref{fig:G}.
As a function of~$n$, the optimum $\alpha_{n}$ decreases roughly (though not strictly) and reaches values below~$0.56$
for large~$n$.
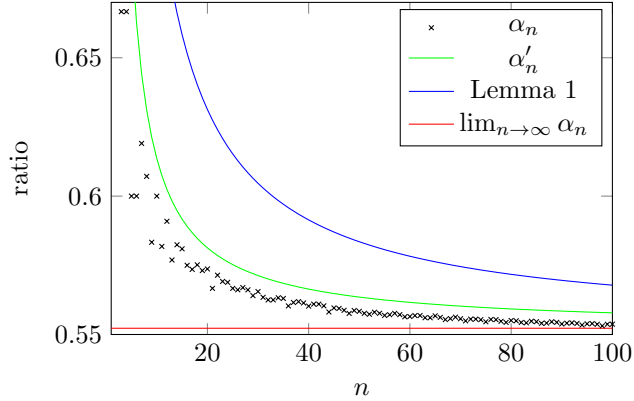
\begin{figure}
  \centering
  \begin{tikzpicture}
    \begin{axis}[width=0.68\linewidth, height=6cm,
      xlabel={$n$}, ylabel={ratio},
      xmin=1, xmax=100, ymin=0.55, ymax=0.67,
      legend entries={$\alpha_{n}$,$\alpha_{n}'$,{Lemma~\ref{lem:discrete-vs-continuous}},$\lim_{n\to\infty}\alpha_{n}$}
      ]
      \addplot[black,only marks,mark size=1.3pt,mark=x] table[x index=0, y index=1] {discrete-rel.dat};
             \addplot[green,domain=1:100,samples=100]{0.55225694*x/(x-1)};
      \addplot[blue,domain=1:100,samples=100]{0.55225694*x/(x-1)+1/x};
      \addplot[red,domain=1:100,samples=2]{0.55225694};
     \end{axis}
  \end{tikzpicture}
  \caption{The solution of the discrete and continuous relaxation.  The blue curve plots the upper bound on~$\alpha_{n}$
    from Lemma~\ref{lem:discrete-vs-continuous}.  The red curve is the limit for large~$n$ of $\alpha_{n}$
    and~$\alpha_{n}'$.}
  \label{fig:G}
\end{figure}

The limit for $n\to\infty$ can be computed by approximating the discrete relaxation by the following optimization problem, which we call the \emph{continuous relaxation}:
\begin{itemize}
\item \emph{Maximize $\frac{\mu(A')}{(n-1)^{2}}$ (where $\mu$ denotes the Lebesque measure) among all subsets $A'\subseteq[1,n]\times[1,n]$ that
are symmetric to the line $y=x$ and
  satisfy\\ \mbox{$\iint_{A'}\frac{1}{x}\drm x\drm y\le n$}.}
\end{itemize}
Let $\alpha_{n}'$ be the maximum of the continuous relaxation.
\begin{lemma}
  \label{lem:discrete-vs-continuous}
  $\alpha_{n} \le \frac{n-1}{n}\alpha_{n}' + \frac{1}{n}$.
\end{lemma}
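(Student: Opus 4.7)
The plan is to take an optimal $A$ for the discrete relaxation and build a set $A'$ feasible for the continuous relaxation whose Lebesgue measure is essentially $2|A|$. The factor $2$ is forced because $A'$ must be symmetric about the diagonal while $A \subseteq P_n$ lies above it; after symmetrizing and inflating each lattice point to a unit square, we expect $\mu(A') \approx 2|A|$, and combining this with $|A| = \binom{n}{2}\alpha_n$ should yield the inequality directly.

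Concretely, I would set $A_{\mathrm{sym}} = A \cup \{(j,i) : (i,j)\in A\}$ and
\[ A' \;=\; \bigcup_{(i,j)\in A_{\mathrm{sym}}}[i,i+1]\times[j,j+1] \;\subseteq\; [1,n]\times[1,n]. \]
Each off-diagonal point of $A$ doubles under symmetrization while the at most $n-1$ diagonal points are fixed, so $\mu(A') = |A_{\mathrm{sym}}| \ge 2|A|-(n-1)$; moreover $A'$ is symmetric about the line $y=x$ by construction.

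The step I would worry about is verifying the integral constraint. The key estimate is
\[ \iint_{A'}\tfrac{1}{x}\,\drm x\,\drm y \;=\; \sum_{(i,j)\in A_{\mathrm{sym}}} \ln\!\left(1+\tfrac{1}{i}\right) \;\le\; \sum_{(i,j)\in A_{\mathrm{sym}}}\tfrac{1}{i}, \]
by Fubini together with the elementary bound $\ln(1+t)\le t$. Using the symmetry of $A_{\mathrm{sym}}$ under the coordinate swap, the right-hand side equals $\sum_{(i,j)\in A,\,i<j}\bigl(\tfrac{1}{i}+\tfrac{1}{j}\bigr)+\sum_{(i,i)\in A}\tfrac{1}{i}$, which is at most $\sum_{(k_1,k_2)\in A}\tfrac{k_1+k_2}{k_1k_2}\le n$ by the preceding corollary. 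Hence $A'$ is feasible for the continuous relaxation.

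Feasibility then yields $\alpha_n' \ge \mu(A')/(n-1)^2 \ge (n(n-1)\alpha_n-(n-1))/(n-1)^2$, which rearranges to $\alpha_n \le \tfrac{n-1}{n}\alpha_n'+\tfrac{1}{n}$. The main obstacle is the bookkeeping around diagonal versus off-diagonal contributions, which enter the measure count and the integral constraint asymmetrically; the inequality $\ln(1+1/i)\le 1/i$ is what makes the unit-square inflation preserve feasibility and hence links the discrete and continuous optima.
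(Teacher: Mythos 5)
Your proposal is correct and follows essentially the same route as the paper: symmetrize $A$ about the diagonal, inflate to unit squares to get a feasible $A'$ for the continuous relaxation, account for the at most $n-1$ diagonal squares in the measure, and rearrange. The only cosmetic difference is that you verify feasibility by computing $\iint\frac1x$ over each square exactly as $\ln(1+1/i)$ and applying $\ln(1+t)\le t$, whereas the paper bounds the integrand $\frac1x+\frac1y$ pointwise by its value at the corner $(i,j)$; also note the final bound $\le n$ is just the defining constraint of the discrete relaxation rather than the corollary itself.
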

\begin{proof}
  To each $(i,j)\in P_{n}$ associate the two squares $A_{i,j}:=[i,i+1)\times[j,j+1)$ and
  $A_{j,i}:=[j,j+1)\times[i,i+1)$.  For $A\subseteq P_{n}$ let $A''=\bigcup_{(i,j)\in A}A_{i,j}$ and
  $A'=A''\cup\bigcup_{(i,j)\in A}A_{j,i}$.  Then
  \begin{multline*}
    \sum_{(i,j)\in A}\frac{i+j}{i\cdot j}
    \ge\sum_{(i,j)\in A}\iint_{A_{i,j}}\frac{x+y}{x\cdot y}\drm x\drm y
    = \iint_{A''}\frac{x+y}{x\cdot y}\drm x\drm y \\
    \ge \frac12 \iint_{A'}\frac{x+y}{x\cdot y}\drm x\drm y
    = \iint_{A'}\frac{1}{x}\drm x\drm y.
  \end{multline*}
  Here, the first inequality follows from the fact that the maximum of $\frac{x+y}{xy}=\frac1x+\frac1y$ over~$A_{i,j}$
  is at $(x,y)=(i,j)$.  The second inequality follows by not double-counting the set $A_{d}:=\bigcup_{(i,i)\in
    A}A_{i,i}$ corresponding to the diagonal elements of~$A$.  The last equality follows since $\frac{x+y}{x\cdot
    y}=\frac1x+\frac1y$ and since $A'$ is symmetric about $y=x$.  Therefore, if $A$ is feasible for the discrete
  relaxation, then $A'$ is feasible solution for the continuous relaxation.
  Now,
  \begin{equation*}
    |A|=\mu(A'')=\frac{1}{2}(\mu(A') + \mu(A_{d}))\le\frac{\mu(A')}{2} + \frac{n-1}{2},
  \end{equation*}
  and so
  \begin{equation*}
    \alpha_{n} \le \frac{(n-1)^{2}}{2\binom n2}\alpha'_{n} + \frac{n-1}{2\binom n2}
    =\frac{n-1}{n}\alpha'_{n}+\frac{1}{n}. \qedhere
  \end{equation*}
\end{proof}
\begin{coro}
  $\limsup_{n\to\infty} \alpha_{n}\le\limsup_{n\to\infty}\alpha_{n}'$.
\end{coro}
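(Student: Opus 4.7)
The plan is to simply take the limit superior on both sides of the inequality provided by Lemma~\ref{lem:discrete-vs-continuous}, namely $\alpha_n \le \frac{n-1}{n}\alpha_n' + \frac{1}{n}$. First I would observe that the sequence $(\alpha_n')$ is bounded: since $A' \subseteq [1,n]\times[1,n]$, we have $\mu(A') \le (n-1)^2$, so $\alpha_n' \le 1$, guaranteeing that $\limsup_{n\to\infty}\alpha_n'$ is a finite number.

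Next, since the coefficient $\frac{n-1}{n}$ converges to $1$ as $n\to\infty$ and the additive term $\frac{1}{n}$ converges to $0$, the standard rules for $\limsup$ of a product (with one factor being a convergent sequence) and of a sum (with one summand convergent) yield
\begin{equation*}
\limsup_{n\to\infty}\alpha_n \le \limsup_{n\to\infty}\left(\frac{n-1}{n}\alpha_n' + \frac{1}{n}\right) = \Bigl(\lim_{n\to\infty}\frac{n-1}{n}\Bigr)\cdot\limsup_{n\to\infty}\alpha_n' + \lim_{n\to\infty}\frac{1}{n} = \limsup_{n\to\infty}\alpha_n',
\end{equation*}
which is exactly the desired inequality.

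There is no real obstacle here: the corollary is essentially a one-line consequence of the lemma combined with elementary properties of $\limsup$. The only thing to be mildly careful about is confirming boundedness of $(\alpha_n')$ so that manipulating $\limsup$ is legitimate, but this is immediate from the definition of $\alpha_n'$ as a ratio whose numerator is a Lebesgue measure of a subset of a square of area $(n-1)^2$.
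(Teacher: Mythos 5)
Your proof is correct and is precisely the intended argument: the paper states this corollary without proof because it follows immediately from Lemma~\ref{lem:discrete-vs-continuous} by taking $\limsup$ of both sides, exactly as you do. Your extra care in noting that $\alpha_n'\le 1$ (so the $\limsup$ manipulations are legitimate) is a reasonable precaution but not a departure from the paper's approach.
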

It is not difficult to see that, actually, $\lim_{n\to\infty} \alpha_{n}=\lim_{n\to\infty}\alpha_{n}'$.
Figure~\ref{fig:G} shows that the upper bound from Lemma~\ref{lem:discrete-vs-continuous} is not very tight for finite~$n$.

Next, we solve the continuous relaxation.  The idea is the following: As the set $A'$ it is advantageous to
choose a sublevel set of the function~$\frac{x+y}{x\cdot y}$.
For $c>0$ let
\begin{equation*}
  A_{c} := \Big\{ (x,y)\in[1,n]^{2} : \frac{x+y}{x\cdot y}\le c \Big\}.
\end{equation*}
Let
\begin{align*}
  y_{c}(x) &= \frac{1}{c - \frac1x} = \frac{x}{xc-1}, &
  x_{1}(c) &= \frac{1}{c - \frac1n} = \frac{n}{nc-1}.
\end{align*}
\begin{lemma}
  \label{lem:Ac}
  $A_{c} = \Big\{ (x,y)\in[1,n]^{2}: x_{1}(c) \le x \le n, \; y_{c}(x) \le y\le n\Big\}$.  In particular,
  $A_{c}\neq\emptyset$ if and only if $nc\ge 2$.
\end{lemma}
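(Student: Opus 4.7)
The plan is to rewrite the defining inequality in the more convenient form
\[ \frac{x+y}{xy} \;=\; \frac{1}{x}+\frac{1}{y}, \]
and then peel off the two variables one at a time. After this identity, the membership condition for $A_c$ reads $\frac{1}{y}\le c-\frac{1}{x}$. For any positive solution $y$ to exist, one must have $c-\frac{1}{x}>0$, i.e.\ $x>1/c$; and when this holds, the inequality rearranges to $y\ge (c-\frac{1}{x})^{-1}=y_{c}(x)$. This is the $y$-part of the claimed description, so what remains is the $x$-range and the compatibility with the outer box $[1,n]^{2}$.

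Next I would feed in the constraint $y\le n$. A one-line derivative computation shows $y_{c}$ is strictly decreasing on $(1/c,\infty)$, so $y_{c}(x)\le n$ is equivalent to $x\ge y_{c}^{-1}(n)=x_{1}(c)$. A direct comparison $\frac{n}{nc-1}>\frac{1}{c}$ (which reduces to $nc>nc-1$) shows $x_{1}(c)>1/c$, so once we impose $x\ge x_{1}(c)$ the earlier positivity requirement $x>1/c$ is automatic. Putting these pieces together with the box constraint $x\in[1,n]$ gives exactly the description in the lemma. The one cosmetic subtlety is that $y_{c}(x)$ may be $\le 1$ for $c$ large; but then the constraint $y_{c}(x)\le y$ is already implied by $y\ge 1$, so no case-splitting is needed and the formula remains correct as written.

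For the "in particular" statement, the description just obtained says $A_{c}\neq\emptyset$ iff some $x\in[1,n]$ satisfies $x\ge x_{1}(c)$, which is equivalent to $x_{1}(c)\le n$ (whenever $nc>1$, so that $x_{1}(c)$ is finite and positive). Solving $\frac{n}{nc-1}\le n$ gives $nc-1\ge 1$, i.e.\ $nc\ge 2$. In the complementary regime $nc<2$, one checks emptiness directly: if $nc\le 1$ then $\frac{1}{x}\ge\frac{1}{n}\ge c$ for every $x\in[1,n]$, making $\frac{1}{x}+\frac{1}{y}>c$ for all $y>0$; and if $1<nc<2$ then $x_{1}(c)>n$ by the same equivalence, so the $x$-range is empty.

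There is no real obstacle beyond being careful with the domains of definition: track when $cx-1$ is positive, note that $x_{1}(c)>1/c$ makes this automatic, and confirm that the representation is insensitive to whether $y_{c}(x)$ drops below~$1$.
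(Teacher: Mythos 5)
Your proof is correct and takes essentially the same approach as the paper's: both describe the sublevel set of $\frac{1}{x}+\frac{1}{y}$ by solving for $y$ in terms of $x$, using that $y_{c}$ is decreasing so that $y_{c}(x)\le n$ exactly when $x\ge x_{1}(c)$, and then read off non-emptiness from $x_{1}(c)\le n$ iff $nc\ge 2$. Your write-up is if anything a bit more careful than the paper's, since you explicitly track the sign of $cx-1$ and handle the degenerate regimes $nc\le 1$ and $1<nc<2$ separately.
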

\begin{proof}
  If $x<x_{1}(c)$ and $1\le y\le n$, then $\frac1x+\frac1y> c - \frac1n + \frac1n = c$.  If $x_{1}(c)\le x\le n$ and
  $1\le y< y_{c}(x)$, then $\frac1x+\frac1y> c - \frac1x + \frac1x = c$.  For the second statement observe that
  $x_{1}(c)\le n$ if and only if~$nc\ge 2$.  Similarly, $y_{c}(x)\le n$ if and only if $x\ge x_{1}(c)$.
\end{proof}
\begin{lemma}
  Assume that $c$ is such that $x_{1}(c)\ge 1$.  Then $y_{c}(x)\ge 1$ for all~$x\in[1,n]$.
\end{lemma}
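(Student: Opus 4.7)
The plan is to reduce the claim to the defining formulas of $x_1(c)$ and $y_c(x)$ together with the monotonicity of $x\mapsto c - 1/x$. Rewriting the hypothesis $x_1(c) = 1/(c - 1/n) \ge 1$ gives $c - 1/n \le 1$ (and, implicitly, $c > 1/n$ so that $x_1(c)$ is positive). The key observation is then simply that $y_c(n) = 1/(c - 1/n) = x_1(c)$, and that $y_c$ is a decreasing function of $x$ on its natural domain.

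Carrying this out: for any $x \in [1,n]$ one has $1/x \ge 1/n$, so $c - 1/x \le c - 1/n \le 1$. Taking reciprocals of these positive quantities reverses the inequality, yielding
\[
y_c(x) \;=\; \frac{1}{c - 1/x} \;\ge\; \frac{1}{c - 1/n} \;=\; x_1(c) \;\ge\; 1,
\]
which is the desired conclusion.

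The only point requiring care is the sign of $c - 1/x$: if $c < 1$ and $x$ is close to $1$, then $c - 1/x$ may be non-positive and the formula for $y_c(x)$ returns a negative or undefined value. In the context of Lemma~\ref{lem:Ac}, however, the threshold $y_c(x)$ is used as a lower bound on $y \in [1,n]$, so in this degenerate regime the constraint $y \ge y_c(x)$ is vacuous and one may naturally set $y_c(x) := +\infty$; with this convention the inequality $y_c(x)\ge 1$ is immediate. I do not anticipate any real obstacle — the entire argument amounts to one application of monotonicity and one reciprocation, with the only mild care being the sign convention for $y_c$ on the subinterval where $c - 1/x \le 0$.
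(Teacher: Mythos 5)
Your proof is correct and is essentially the paper's own argument: the paper simply notes that $y_{c}$ decreases monotonically in $x$, so its minimum on $[1,n]$ is $y_{c}(n)=x_{1}(c)\ge 1$, which is exactly your reciprocation-of-a-monotone-quantity step. Your additional remark about the sign of $c-\tfrac1x$ near $x=1$ flags a real imprecision that the paper's one-line proof glosses over (for the relevant $c=\beta_{0}/n$ one has $y_{c}(1)<0$, so the statement only holds literally on $[x_{1}(c),n]$ or under a convention like yours), so it is a careful and worthwhile addition rather than a detour.
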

\begin{proof}
  $y_{c}(x)$ decreases monotonically with~$x$.  Therefore, $y_{c}(x)\ge y_{c}(n)=x_{1}(c)$ for all~$x\in[1,n]$.
\end{proof}
\begin{lemma}
  Let $n\ge 3$.
  The set $A_{c}$ is feasible for the continuous relaxation if and only if
  \begin{equation}
    \label{eq:crit-c-ineq}
    (nc-2)\log(nc-1) \le nc
  \end{equation}
\end{lemma}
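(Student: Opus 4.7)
The plan is to compute the integral $\iint_{A_{c}} \frac{1}{x}\drm x\drm y$ in closed form using the explicit description of $A_c$ provided by Lemma~\ref{lem:Ac}, and then rewrite the feasibility constraint $\iint_{A_c} \frac1x \drm x \drm y \le n$ in the stated algebraic form. I will restrict to the nontrivial case $nc\ge 2$, since otherwise $A_c=\emptyset$ by Lemma~\ref{lem:Ac} and the left-hand integral vanishes.

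By Lemma~\ref{lem:Ac}, I would write the integral as an iterated integral, performing the $y$-integration first:
\begin{equation*}
  \iint_{A_{c}} \frac{1}{x}\drm x\drm y = \int_{x_{1}(c)}^{n} \frac{n - y_{c}(x)}{x}\drm x
  = n\int_{x_{1}(c)}^{n}\frac{\drm x}{x} - \int_{x_{1}(c)}^{n}\frac{y_{c}(x)}{x}\drm x.
\end{equation*}
The critical algebraic simplification is that $y_c(x)/x = 1/(xc-1)$, which follows directly from $y_c(x)=x/(xc-1)$. The antiderivative of $1/(xc-1)$ is $\frac{1}{c}\log(xc-1)$, so the two pieces evaluate cleanly in terms of logarithms at $x=x_1(c)$ and $x=n$.

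Next I would plug in the boundary values. From $x_{1}(c) = n/(nc-1)$ one computes $n/x_{1}(c) = nc - 1$ and $x_{1}(c)c - 1 = 1/(nc-1)$, so both logarithms collapse to $\pm\log(nc-1)$. Combining gives
\begin{equation*}
  \iint_{A_c}\frac{1}{x}\drm x\drm y = n\log(nc-1) - \frac{1}{c}\bigl[\log(nc-1) - \log\tfrac{1}{nc-1}\bigr]
  = \left(n - \frac{2}{c}\right)\log(nc-1) = \frac{nc-2}{c}\log(nc-1).
\end{equation*}
The feasibility condition $\iint_{A_c}\frac{1}{x}\drm x\drm y \le n$ is therefore equivalent to $\frac{nc-2}{c}\log(nc-1)\le n$; since $c>0$, multiplying by $c$ yields exactly \eqref{eq:crit-c-ineq}.

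The computation is essentially routine, so I do not expect any real obstacle — the only care needed is in the substitutions at the endpoints, where the identity $x_1(c)c-1 = 1/(nc-1)$ makes the two log terms combine into a single factor of $2\log(nc-1)$. At the boundary $nc=2$, both sides of \eqref{eq:crit-c-ineq} vanish, consistent with $A_c$ degenerating to a single point, which gives a sanity check on the formula.
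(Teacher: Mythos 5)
Your computation is the paper's computation: integrate out $y$ first, use $y_{c}(x)/x=1/(xc-1)$, and collapse the two boundary logarithms via $c\,x_{1}(c)-1=1/(nc-1)$ to get $\iint_{A_{c}}\frac{1}{x}\,\drm x\,\drm y=\bigl(n-\frac{2}{c}\bigr)\log(nc-1)$, which rearranges to \eqref{eq:crit-c-ineq}. So the core of the argument matches the paper's exactly.

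There is, however, a genuine gap at the endpoints. Writing the integral as $\int_{x_{1}(c)}^{n}\frac{n-y_{c}(x)}{x}\,\drm x$ is only legitimate when $x_{1}(c)\ge 1$ (equivalently $nc\le n+1$), because $A_{c}$ lives inside $[1,n]^{2}$: for larger $c$ the correct limits are $\max(1,x_{1}(c))$ and $\max(1,y_{c}(x))$, and your closed form then strictly overestimates the true integral over~$A_{c}$. You verify only $nc\ge 2$, which controls the other endpoint ($x_{1}(c)\le n$) but not this one. The overestimate is harmless for the ``if'' direction (a dominating quantity that is $\le n$ still certifies feasibility), but it breaks the ``only if'' direction: when $x_{1}(c)<1$ and your formula exceeds $n$, you cannot conclude that the actual integral exceeds $n$, so you have not shown that $A_{c}$ is infeasible. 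The paper's proof addresses exactly this point: after the same computation it argues (for $n>e$) that any $c$ satisfying \eqref{eq:crit-c-ineq} automatically has $x_{1}(c)>1$, so the closed form is the true integral in the regime where it is used, and the violating $c$ are treated separately. To close your argument, either add the analogous verification that $x_{1}(c)\ge1$ wherever you invoke the closed form, or dispose of the range $c>(n+1)/n$ by monotonicity of $\iint_{A_{c}}\frac{1}{x}\,\drm x\,\drm y$ in~$c$.
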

\begin{proof}
  Assume that $c$ is such that $x_{1}(c)\ge 1$.  Then
  \begin{multline*}
    \iint_{A_{c}}\frac{1}{x}\drm x\drm y = \int_{x_{1}(c)}^{n}\drm x\int_{y_{c}(x)}^{n} \drm y \frac1x
    = \int_{x_{1}(c)}^{n}\drm x\frac{n-y_{c}(x)}{x} \\
    = \int_{x_{1}(c)}^{n}\drm x\left(\frac{n}{x}-\frac{1}{xc-1}\right)
    = n\log\frac{n}{x_{1}(c)} - \frac{1}{c}\log\frac{nc-1}{cx_{1}(c)-1}.
  \end{multline*}
  Now,
  \begin{equation*}
    c x_{1}(c)-1 = \frac{cn - nc + 1}{nc - 1} = \frac{1}{nc-1},
  \end{equation*}
  and so
  \begin{equation*}
    \iint_{A_{c}}\frac{1}{x}\drm x\drm y
    = n\log(nc-1) - \frac1c \log(nc-1)^{2} = (n-\frac2c)\log(nc-1).
  \end{equation*}
  Hence, $A_{c}$ is feasible if and only if
  \begin{equation*}
    (nc-2)\log(nc-1) \le nc.
  \end{equation*}

  Now suppose that $n>e$.  If $c$ satisfies~\eqref{eq:crit-c-ineq}, then
  \begin{equation*}
    x_{1}(c) \ge \frac{n}{\exp(nc/(nc-2))} > \frac ne > 1.
  \end{equation*}
  Thus,
  the above calculation is valid and shows that $A_{c}$ is feasible.  On the other hand, if $n>e$ and if $c$
  violates~\eqref{eq:crit-c-ineq}, then $A_{c}$ is not feasible.
\end{proof}
To find the solution of the continuous relaxation, we need to find the value of $c$ that solves~\eqref{eq:crit-c-ineq}
with equality.  Consider
the equation
\begin{equation*}
\log(\beta-1)=\frac{\beta}{\beta-2}.
\end{equation*}
Both the left and the right hand side
change sign at $\beta=2$.  For $\beta>2$, both sides are positive, and for $\beta<2$ they are negative.
By Lemma~\ref{lem:Ac}, we are looking for a solution larger than~2.  For $\beta>2$, the right hand side is decreasing,
while the left hand side is increasing.  It follows that there is a unique solution~$\beta_{0}>2$.  Numerically,
$\beta_{0}\approx 5.68050$.  Thus, $A_{c}$ is feasible if and only if~$c\le\beta_{0}/n$, and
in order to maximize $|A_{c}|$, we have to choose $c=\beta_{0}/n$.
\begin{lemma}
  $x_{1}(\beta_{0}/n)>1$ for $n$ large enough.
  \end{lemma}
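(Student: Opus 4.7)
The plan is a direct computation: substitute $c = \beta_0/n$ into the definition $x_1(c) = \frac{n}{nc - 1}$ already provided, and simplify. The $n$'s in the numerator of $nc$ cancel, giving
\begin{equation*}
x_1(\beta_0/n) = \frac{n}{\beta_0 - 1}.
\end{equation*}
So the inequality $x_1(\beta_0/n) > 1$ is just $n > \beta_0 - 1$.

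Since $\beta_0$ was identified (via the numerical root of $\log(\beta-1) = \beta/(\beta-2)$) as approximately $5.68050$, we have $\beta_0 - 1 \approx 4.68050$, and the inequality holds as soon as $n \ge 5$. More rigorously, one only needs to record that $\beta_0$ is a fixed constant (independent of $n$) and that $\frac{n}{\beta_0 - 1} \to \infty$ as $n \to \infty$; this automatically gives the qualitative statement ``for $n$ large enough.''

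Since the algebra is a single substitution and the numerical comparison is immediate, there is no genuine obstacle here. The statement is really just a bookkeeping remark confirming that the hypothesis $x_1(c) \ge 1$ used in the preceding lemma is in force for the value $c = \beta_0/n$ in the regime of interest, so that the earlier computations of $\iint_{A_c} \frac{1}{x}\,\drm x\,\drm y$ apply and $A_{\beta_0/n}$ is a legitimate feasible set for the continuous relaxation. The main purpose of writing it out is to justify using $A_{\beta_0/n}$ in the asymptotic evaluation of $\alpha_n'$ that will follow.
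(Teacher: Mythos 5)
Your proof is correct and is essentially identical to the paper's: both substitute $c=\beta_0/n$ into $x_1(c)=n/(nc-1)$ to get $n/(\beta_0-1)$ and observe this exceeds $1$ once $n>\beta_0-1$. No differences worth noting.
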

\begin{proof}
  $x_{1}(\beta_{0}/n) - 1
    = \frac{n - \beta_{0}+1}{\beta_{0}-1}>0$ for $n$ large enough.
\end{proof}
It remains to compute the maximum value of the continuous relaxation and to put everything together.
\begin{theorem}\label{thm:1}
  For any graph $G$ with $n$ vertices,
  \begin{equation*}
    \frac{|A(G)|}{\binom{n}{2}} \le \alpha_{n}' = \frac{n^{2}}{(n-1)^{2}}\frac{(\beta_{0}-2)^{2}-2}{\beta_{0}(\beta_{0}-2)}
    \approx \frac{n^{2}}{(n-1)^{2}}0.55225694,
  \end{equation*}
  where $A(G)$ is the set of non-zero elements in the JDV of~$G$.
\end{theorem}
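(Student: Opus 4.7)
The strategy is to chain together three ingredients that have already been set up in this subsection. First, the corollary following Proposition~\ref{prop:8} says that the JDV of any graph with $n$ vertices yields a feasible point of the discrete relaxation, so $|A(G)|/\binom{n}{2}\le\alpha_{n}$. Second, Lemma~\ref{lem:discrete-vs-continuous} transfers this bound to the continuous setting. Thus the entire problem reduces to evaluating the maximum $\alpha_{n}'$ of the continuous relaxation and then combining.

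For the continuous relaxation, the preceding lemmas have already identified the shape of an optimal set: among sublevel sets $A_c$ of $(x+y)/(xy)$, the largest feasible one occurs at $c=\beta_{0}/n$, where $\beta_{0}>2$ is the unique solution of $\log(\beta-1)=\beta/(\beta-2)$ (numerically $\beta_{0}\approx 5.68050$). One still needs to argue that sublevel sets are globally optimal among all feasible symmetric $A'\subseteq[1,n]^{2}$. This can be done by a rearrangement-type argument: since the objective depends only on $\mu(A')$ and the constraint integrand $1/x$ is symmetrized to $\tfrac{1}{2}(1/x+1/y)$ on $A'$, trading area away from points of larger $1/x+1/y$ toward points of smaller value preserves the constraint while not decreasing the measure, so the optimum can be assumed to be a sublevel set.

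The main computational step is then to evaluate $\mu(A_{\beta_{0}/n})$. I would use Lemma~\ref{lem:Ac} to write
\begin{equation*}
  \mu(A_c) = \int_{x_{1}(c)}^{n}\bigl(n-y_{c}(x)\bigr)\,\drm x,
\end{equation*}
decompose $x/(xc-1)=1/c+1/(c(xc-1))$, and integrate. Two simplifications make the result clean: the identity $cx_{1}(c)-1=1/(nc-1)$ already used in the feasibility lemma, which collapses the logarithmic term to $(2/c^{2})\log(nc-1)$; and the defining relation $\log(\beta_{0}-1)=\beta_{0}/(\beta_{0}-2)$, which removes the logarithm at the optimal $c=\beta_{0}/n$. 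Together with $x_{1}(\beta_{0}/n)=n/(\beta_{0}-1)$, the computation yields
\begin{equation*}
  \mu(A_{\beta_{0}/n}) = n^{2}\cdot\frac{(\beta_{0}-2)^{2}-2}{\beta_{0}(\beta_{0}-2)},
\end{equation*}
and dividing by $(n-1)^{2}$ gives the claimed value of $\alpha_{n}'$.

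The main obstacle is purely bookkeeping: chaining the correction factors (the $1/n$ term in Lemma~\ref{lem:discrete-vs-continuous} and the $n^{2}/(n-1)^{2}$ prefactor arising from the normalization) so that the final bound matches the stated closed form, and threading the two simplifying identities through the elementary integral. No new idea is required beyond assembling the pieces developed in Lemmas~\ref{lem:Ac}--\ref{lem:discrete-vs-continuous}.
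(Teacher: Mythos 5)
Your proposal is correct and follows essentially the same route as the paper: the paper's proof of Theorem~\ref{thm:1} is exactly the evaluation of $\mu(A_{\beta_{0}/n})$ via the decomposition $y_{c}(x)=\frac1c\bigl(1+\frac{1}{cx-1}\bigr)$, the identity $cx_{1}(c)-1=\frac{1}{nc-1}$, and the defining relation $\log(\beta_{0}-1)=\beta_{0}/(\beta_{0}-2)$, yielding $n^{2}\frac{(\beta_{0}-2)^{2}-2}{\beta_{0}(\beta_{0}-2)}$ divided by $(n-1)^{2}$. The one point where you go beyond the paper is the explicit rearrangement (bathtub-principle) argument for why sublevel sets of $\frac{x+y}{xy}$ are optimal among all symmetric feasible sets --- the paper merely asserts this --- so that addition is welcome rather than a deviation.
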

\begin{proof}
  If $x_{1}(c)\ge 1$, then
\begin{equation*}
  |A_{c}|=\iint_{A}\drm x\drm y
  = \int_{x_{1}(c)}^{n}\drm x\int_{y_{c}(x)}^{n} \drm y
  = \int_{x_{1}(c)}^{n}\drm x(n-y_{c}(x)).
\end{equation*}
Now,
\begin{equation*}
  y_{c}(x) = \frac1c\left(\frac{x}{x-\frac1c}\right)
  = \frac1c\left(1 + \frac{1/c}{x-\frac1c}\right)
  = \frac1c\left(1 + \frac{1}{cx-1}\right),
\end{equation*}
and so
\begin{multline*}
  |A_{c}|
  = \int_{x_{1}(c)}^{n}\drm x(n-\frac1c - \frac{1/c}{cx-1})
  = (n-\frac1c)(n-x_{1}(c)) - \frac{1}{c^{2}}\log\frac{cn-1}{cx_{1}(c)-1} \\
  = n^{2}\frac{nc-1}{nc}\frac{nc-2}{nc-1} - \frac{2}{c^{2}}\log(nc-1).
\end{multline*}
Therefore,
\begin{equation*}
  \alpha_{n}' = \frac{|A_{\beta_{0}/n}|}{(n-1)^{2}}
  = \frac{n^{2}}{(n-1)^{2}}\left[\frac{\beta_{0}-2}{\beta_{0}} - \frac{2}{\beta_{0}^{2}}\frac{\beta_{0}}{\beta_{0}-2}\right]
  = \frac{n^{2}}{(n-1)^{2}}\frac{(\beta_{0}-2)^{2}-2}{\beta_{0}(\beta_{0}-2)}.
\end{equation*}
\end{proof}

\subsection{Second Bound}\label{sec:5}

Let $G=(V,E)$ be an $n$-vertex graph and let $A(G)$ be the set of non-zero elements in the JDV of $G$, as defined as in Section~$\ref{sec:4}$.  Denote by $n_i=|V_{i}|$ the number of vertices with degree $i$.  We call $i$ a single if $n_i=1$ and multiple if $n_i\geq 2$, noting that some $i$ are neither single nor multiple, as they just do not occur as degrees. As before, for $1\le i\le k\le n-1$, let $j_{ik}$ be the number of edges between the $i$th and $k$th degree classes and
$\chi_{ik}=1$ if $j_{ik}>0$, and 0 otherwise. It is easy to see that $|A(G)|=\sum_{i=1}^{n-1}\sum_{k=i}^{n-1}\chi_{ik}$. Now we set $D_i=\sum_{k=1}^i \chi_{ki}+\sum_{k=i+1}^{n-1}\chi_{ik}$ and $B(G)=\sum_{i=1}^{n-1} D_i$. Note that for $k\neq i$, $D_i$ counts $\chi _{ki} = \chi _{ik}$ twice but $\chi _{ii}$ is counted only once, so we get $|A(G)|\le \frac{B(G) + n-1}{2}$ and therefore
$$
\frac{|A(G)|}{\binom{n}{2}} \le \frac{B(G)+n-1}{2}\cdot\frac{2}{n(n-1)}=(1+o(1))\frac{B(G)}{n^2}.
$$
We use this to prove the following theorem:

\begin{theorem}
  \label{mainthm}
  For any graph $G$ with $n$ vertices,
  $$
  \frac{|A(G)|}{\binom{n}{2}} \le (1+o(1))\frac{13}{24} = (1 + o(1)) 0.541\overline{6},
  $$
  where $A(G)$ is the set of non-zero elements in the JDV of~$G$.
\end{theorem}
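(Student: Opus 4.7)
Since $|A(G)|\le(B(G)+n-1)/2$, the theorem is equivalent to showing
$B(G)\le(1+o(1))\frac{13n^{2}}{24}$. My plan is to partition the degree
classes into singles and multiples, bound $D_i$ in each case, and then
optimise. Partition the used degrees $U=\{i:n_i\ge 1\}$ into
\emph{singles} $S=\{i:n_i=1\}$ and \emph{multiples} $M=\{i:n_i\ge 2\}$,
with $|S|=s$ and $|M|=m$. Since we may assume $n_0(G)=0$ (by the
reduction already used in Section~\ref{sec:4}), the identity
$\sum_{i}n_i=n$ forces $s+2m\le n$. For $i\in S$, the unique vertex of
$V_i$ has exactly $i$ neighbours and cannot produce the self-loop
$\chi_{ii}$, giving $D_i\le i$. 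For $i\in M$, the $n_i$ vertices of
$V_i$ together emit $in_i$ edge-endpoints into other classes, bounding
the number of distinct other classes reached; together with the
possible self-loop this yields $D_i\le in_i+1$. Trivially also
$D_i\le s+m$ since only $s+m$ classes exist.

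To maximise $B(G)=\sum_{i\in S}D_i+\sum_{i\in M}D_i$ under these
constraints, I would place singles at the largest $s$ degrees (which
maximises $\sum_{i\in S}i$) and set $n_i=2$ for every multiple (which
saturates $s+2m\le n$ and maximises $m$, the parameter that controls
the multiple contribution most effectively). Under these choices each
multiple class satisfies $D_i\le\min(2i+1,\,s+m)$. Writing
$\sigma=s/n$ and $\mu=m/n$ with $\sigma+2\mu=1$, one then obtains
$B(G)/n^2$ as a piecewise quadratic function of~$\mu$: the break-points
correspond to the values of~$\mu$ at which the cutoff $2i+1=s+m$
enters or leaves the interval of multiple degrees, producing two or
three sub-intervals on which different quadratics govern the bound.

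The main obstacle is the optimisation step. On each sub-interval one
must maximise the governing quadratic, then compare the resulting
values across sub-intervals to identify the global maximum and verify
that it does not exceed $\frac{13}{24}$. This is a direct but somewhat
delicate calculation; the critical $\mu$ lies in the intermediate
regime where both pieces of the bound $D_i\le\min(2i+1,s+m)$ are
active for different parts of~$M$. Lower-order contributions (the
constants $+1$ in the bounds on $D_i$, the term $n-1$ in
$|A(G)|\le(B(G)+n-1)/2$, and the difference between $\binom{n}{2}$ and
$n^{2}/2$) are all $o(1)$ and are absorbed into the claimed error
term, completing the proof.
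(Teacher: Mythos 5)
Your setup is sound and close in spirit to the paper's: the reduction to bounding $B(G)=\sum_i D_i$, the split into singles and multiples, and the individual bounds $D_i\le\min(i,\,s+m)$ for singles and $D_i\le\min(in_i+1,\,s+m)$ for multiples are all correct (the paper uses the slightly cleaner $D_i\le\min(m,in_i,mn_i)$ with $m$ denoting the total number of occurring degrees, i.e.\ your $s+m$). The gap is in the very next step: you \emph{assert} that the maximum of $\sum_{i\in S}\min(i,s+m)+\sum_{i\in M}\min(in_i+1,s+m)$ over all admissible configurations is attained by placing the singles at the top $s$ degrees and giving every multiple class exactly two vertices. This is a claim about the maximizer of a high-dimensional combinatorial optimization --- the free data are the partition of the degree set into singles and multiples, the locations of those degrees, and each individual $n_i$ --- and it is genuinely delicate because the cap $s+m$ appears in \emph{every} term, so merging or splitting a class changes the value of all the other summands. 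The parenthetical heuristics you give (``maximises $\sum_{i\in S}i$'', ``maximises $m$, the parameter that controls the multiple contribution most effectively'') are not an exchange argument; e.g.\ it is not obvious a priori that no configuration interleaving singles and multiples, or using some $n_i\ge 3$ to free degree slots, does better. The paper's proof is organized precisely to avoid having to identify the extremal configuration: in Lemma~\ref{lemmaD} the inequality $\min(m,\min(m,i)n_i)\le\sqrt{m\,\min(m,i)\,n_i}$ followed by Cauchy--Schwarz eliminates the individual $n_i$ in favour of the single aggregate $\sum_{i\ \mathrm{multiple}}n_i=n-s$, and Lemma~\ref{sum} likewise removes the dependence on where the degrees sit. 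What remains is a four-variable optimization (Lemma~\ref{lemmaf}) that is solved completely. You have no analogous reduction, and the final piecewise-quadratic maximization you defer is also left undone.

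Two further remarks. First, if your structural claim were justified, your relaxation would actually yield a constant strictly \emph{smaller} than $\frac{13}{24}$ (the normalized objective becomes $\frac14+\frac{5\mu}{2}-\frac{23\mu^2}{4}$ in the relevant regime, maximized at $\mu=5/23$ with value $\frac{12}{23}\approx 0.522$). That would still prove the theorem, but it is a warning sign: the improvement over $\frac{13}{24}$ comes entirely from the unproven reduction, i.e.\ from exactly the losses (the geometric-mean step and Cauchy--Schwarz) that the paper accepts in order to make the optimization rigorous. Second, the paper also needs the preliminary observation that one may assume the number of distinct degrees exceeds $n/\sqrt2$ (otherwise $|A(G)|\le m^2\le n^2/2$ already suffices) before Lemma~\ref{sum} applies; your sketch never engages with where the occurring degrees can lie, which is the other ingredient (constraint~3 of the optimization) needed to make the count close.
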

The proof of Theorem~\ref{mainthm} relies on a sequence of lemmas.
\begin{lemma} \label{lemmaD}
Let $m$ be the number of distinct vertex degrees of~$G$.  Then,
$$
\sum _{i=1}^{n-1} D_i \le \sum _{i:\, i \text{\textup{ single}}} \min(m, i) + \sqrt{m} \sqrt{\sum _{i:\, i \text{\textup{ multiple}}}\min(m,i)} \sqrt{\sum _{i:\, i \text{ \textup{multiple}}} n_i}.
$$
\end{lemma}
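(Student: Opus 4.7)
The plan is to split the sum $\sum_{i=1}^{n-1}D_i$ according to whether $i$ is single, multiple, or does not occur (the last contributing $0$). The single part should be bounded term-by-term, while the multiple part needs Cauchy--Schwarz together with two elementary bounds on $D_i$. The main structural idea is that $D_i$ admits two essentially independent upper bounds whose geometric mean is exactly what is needed to match the target RHS.

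The elementary bounds on $D_i$ are the following. First, since there are only $m$ distinct degree classes in $G$, we have $D_i\le m$. Second, every vertex $v\in V_i$ has exactly $i$ neighbours, and the set of degrees occurring among these neighbours has cardinality at most $\min(m,i)$; since the degree classes contributing to $D_i$ all arise as the degree of some neighbour of some vertex in $V_i$, taking a union over the $n_i$ vertices of $V_i$ yields $D_i\le n_i\min(m,i)$. For single $i$ we have $n_i=1$, so the second bound already gives $D_i\le \min(m,i)$, which handles the first term on the RHS. For multiple $i$, multiplying the two bounds gives the key inequality
\begin{equation*}
D_i^{\,2}\le m\cdot n_i\cdot\min(m,i).
\end{equation*}

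For the multiple part I would apply the Cauchy--Schwarz inequality with the weights $\sqrt{\min(m,i)}$ and $D_i/\sqrt{\min(m,i)}$:
\begin{equation*}
\sum_{i\text{ multiple}}D_i
\le\sqrt{\sum_{i\text{ multiple}}\min(m,i)}\;
\sqrt{\sum_{i\text{ multiple}}\frac{D_i^{\,2}}{\min(m,i)}}.
\end{equation*}
Substituting the inequality $D_i^{\,2}/\min(m,i)\le m\,n_i$ from the previous paragraph into the second factor yields precisely
\begin{equation*}
\sum_{i\text{ multiple}}D_i\le\sqrt{m}\;\sqrt{\sum_{i\text{ multiple}}\min(m,i)}\;\sqrt{\sum_{i\text{ multiple}}n_i},
\end{equation*}
and adding this to the single-$i$ bound $\sum_{i\text{ single}}D_i\le\sum_{i\text{ single}}\min(m,i)$ proves the lemma.

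The only subtle point, and the step I would check carefully, is the second elementary bound $D_i\le n_i\min(m,i)$: one must argue that each degree class contributing to $D_i$ truly comes from a neighbour of some vertex in $V_i$ (including the $k=i$ case, where a nonzero $\chi_{ii}$ forces an edge inside $V_i$, hence requires $n_i\ge 2$, so the bound is not violated for single $i$). Once this verification is in place, the rest of the argument is a mechanical application of Cauchy--Schwarz with the weighting dictated by the two bounds.
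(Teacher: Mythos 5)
Your proposal is correct and follows essentially the same route as the paper: the two elementary bounds $D_i\le m$ and $D_i\le n_i\min(m,i)$ (the paper states the latter as $D_i\le in_i$ combined with $D_i\le m$, which amounts to the same thing), their geometric mean for multiple $i$, and Cauchy--Schwarz with the weights $\sqrt{\min(m,i)}$. Your reordering of the steps (Cauchy--Schwarz first, then substituting $D_i^2/\min(m,i)\le mn_i$) yields the identical final bound, and your careful handling of the $\chi_{ii}$ term for single $i$ is a valid, if unneeded, precaution.
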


\begin{proof}
Observe that $D_i\leq m\leq mi$ and $D_i\leq in_i$, and hence
\begin{equation}
  \label{multiple}
  D_i\leq \min (m, in_i,mn_{i}) = \min (m, \min(m,i)\cdot n_i) \le \sqrt{m \cdot \min(m,i)\cdot n_i},
\end{equation}
since the minimum of two elements is less than their average. 
Note that if $i$ is single we have
\begin{equation} \label{single}
D_i \le \min (m,i).
\end{equation}
Employing $(\ref{multiple})$ and $(\ref{single})$ we get that
\begin{align}
\sum_{i=1}^{n-1} D_i &\leq \sum _{i:\, i \text{ single}} D_i + \sum _{i:\, i \text{ multiple}} D_i \notag \\
&\le \sum _{i:\, i \text{ single}} \min(m, i) + \sqrt{m} \sum _{i:\, i \text{ multiple}}\sqrt{\min(m,i)\cdot n_i} \notag \\
&\le \sum _{i:\, i \text{ single}} \min(m, i) + \sqrt{m} \sqrt{\sum _{i:\, i \text{ multiple}}\min(m,i)} \sqrt{\sum _{i:\, i \text{ multiple}} n_i}, \label{upper}
\end{align}
where the last inequality follows from Cauchy-Schwarz.
\end{proof}

We wish to upper bound the term from $(\ref{upper})$ over all graphs $G$. From our lower bound construction we know that $|A(G)| \ge (1-o(1))\frac12 n^2$. So we may assume that $m > n/\sqrt{2}$, else we would have $|A(G)| \le m^2 \le n^2/2$ and our estimation of $|A(G)|$ would be complete.

\begin{lemma} \label{sum}
$
\displaystyle
\sum _{i:n_i>0} \min(m, i) \le m(n-m-1) + \frac{n(2m-n+1)}{2}.
$
\end{lemma}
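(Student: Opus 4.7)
My plan is to reduce the lemma to a combinatorial extremal problem about sums over distinct integers: the left-hand side depends only on the set of $m$ realized degrees, so it suffices to upper bound
\[
\sum_{k=1}^m \min(m, d_k)
\]
over all strictly increasing integer sequences $d_1 < d_2 < \cdots < d_m$ contained in $\{1, \ldots, n-1\}$.

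First I would observe that, since the $d_k$ are distinct integers with $d_m \le n-1$, we have $d_k \le n - m + k - 1$ for each~$k$, and because $\min(m,\cdot)$ is non-decreasing this yields
\[
\sum_{k=1}^m \min(m, d_k) \;\le\; \sum_{k=1}^m \min(m, n-m+k-1).
\]
Thus the extremal configuration is the one in which the realized degrees form the top block $\{n-m, n-m+1, \ldots, n-1\}$. In the regime where the lemma is actually applied (the paragraph preceding it restricts to $m > n/\sqrt{2}$, in particular $m > n/2$), the inequality $n-m+k-1 \ge m$ holds precisely for $k \ge 2m-n+1 \ge 2$. I would split the sum along this threshold into two pieces: the $n-m$ terms where the min equals $m$, contributing $m(n-m)$, and the $2m-n$ terms forming the arithmetic progression $n-m, n-m+1, \ldots, m-1$, whose sum is $\frac{(n-1)(2m-n)}{2}$.

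Finally, I would add these and verify the routine algebraic identity
\[
m(n-m) + \frac{(n-1)(2m-n)}{2} \;=\; m(n-m-1) + \frac{n(2m-n+1)}{2},
\]
which is the claimed right-hand side. The only subtlety, and the main ``obstacle'' worth flagging, is the boundary case $m \le n/2$, in which the split degenerates and the above argument only yields the trivial estimate $\sum_k \min(m, d_k) \le m^2$; this is harmless here, since the paragraph preceding the lemma has already disposed of the regime $m \le n/\sqrt{2}$ via the direct bound $|A(G)| \le m^2 \le n^2/2$ inside the proof of Theorem~\ref{mainthm}. No deeper difficulty is expected, only careful bookkeeping of the split and the arithmetic of consecutive integers.
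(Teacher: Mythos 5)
Your proof is correct and takes essentially the same route as the paper's: both reduce to the extremal configuration in which the realized degrees form the top block $\{n-m,\dots,n-1\}$ (you justify this reduction a bit more carefully, via $d_k\le n-m+k-1$ and monotonicity of $\min(m,\cdot)$), then split the sum at the threshold $m$ using the standing assumption $m>n/\sqrt{2}$ and evaluate the resulting arithmetic progression. The only cosmetic difference is that you assign the boundary term $i=m$ to the block of terms where the minimum equals $m$ (giving $m(n-m)+\tfrac{(n-1)(2m-n)}{2}$) rather than to the progression, which is algebraically the same total.
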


\begin{proof}
We wish to upper bound $\sum _{i:n_i>0} \min(m, i)$ over all graphs. So assume the $m$ highest possible degrees occur in our graph:
$$
n-1, n-2, \ldots, n-m+1, n-m.
$$
Our assumption $m > n/\sqrt{2}$ implies $m > n-m+1$, so the value of $m$ has to appear in the list of degrees above. There are $n-1-m$ terms strictly larger than $m$ in this list and each contributes $\min(m, i) = m$. The remaining terms  sum  up  exactly $\sum _{i=n-m}^m i$, and hence
\begin{align}
\sum _{i:n_i>0} \min(m, i) &\le m(n-m-1) + \sum _{i=n-m}^m i \notag \\
&= m(n-m-1) + \frac{n(2m-n+1)}{2}.\label{degreesum}
\end{align}
Now if the $m$ highest degrees do not occur in our graph, then some degree less than $n-m+1$ must occur which clearly gives something smaller than the term in $(\ref{degreesum})$.
\end{proof}

Recall from the beginning of the section that a degree $i$ is single if $n_i=1$, that is, there is only one vertex of degree $i$. Let $s$ be the number of degrees $i$ that are singles. Observe that $s \le m$ and $s+2(m-s) \le n$, implying that $s \le m \le \frac{n+s}{2}$.  Using $s+\sum_{i: i\ multiple}n_i=n$ and substituting
\begin{align*}
y &= \sum _{i:i \text{ single}} \min(m, i), &
z &= \sqrt{\sum _{i:i \text{ multiple}}\min(m,i)},
\end{align*}
we can write the term in $(\ref{upper})$ as
\begin{equation*}
g(y,z,s,m) = y + \sqrt{m} \cdot z \sqrt{n-s}.
\end{equation*}
We wish to maximize $g$ subject to the constraints
\begin{enumerate}
\item All variables are non-negative and $s\leq n$,
\item $s \le m \le \frac{n+s}{2}$, and
\item $y+z^2 \le m(n-m-1) + \frac{n(2m-n+1)}{2}$,
\end{enumerate}
where constraint 3 follows from Lemma $\ref{sum}$.

Note that $g(y,z,s,m) = O(n^2)$, so we wish to determine how large   the coefficient of $n^2$ in $g$ can be as $n\rightarrow \infty$. To do this we set $S = s/n$, $M = m/n$, $Y = \sum _{i:i \text{ single}} \min(m, i)/n^2$, and $Z = \sqrt{\sum _{i:i \text{ multiple}}\min(m,i)/n^2}$ and turn to the following numeric optimization problem: maximize $$f(Y,Z,S,M) = Y + \sqrt{M} \cdot Z \sqrt{1-S}$$ subject to the constraints
\begin{enumerate}
\item[(a)] All variables  are non-negative and  $S\leq 1$,
\item[(b)] $S \le M \le \frac{1+S}{2}$, and
\item[(c)] $Y+Z^2 \le M(1-M) + \frac{2M-1}{2}$.
\end{enumerate}
By routine arguments, it follows that $g(y,z,s,m) \le \left( 1 + o(1) \right)f(Y,Z,S,M)n^2$.
Also note that (b) implies that $M\le1$.

\begin{lemma} \label{lemmaf}
If constraints (a), (b), and (c) hold, then
$$
f(Y,Z,S,M) \le \frac{13}{24}.
$$
\end{lemma}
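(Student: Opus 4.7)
The plan is to eliminate $Y$ and $Z$ in closed form by exploiting concavity in $Z$, reducing the problem to a two-variable optimization in $M$ and $S$ that can be solved case-by-case.

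First, I would rewrite the right-hand side of (c) as $h(M):=M(1-M)+\tfrac{2M-1}{2}=2M-M^{2}-\tfrac{1}{2}$, so that (c) reads $Y\le h(M)-Z^{2}$. Substituting this bound for $Y$ and treating
\[
h(M)-Z^{2}+Z\sqrt{M(1-S)}
\]
as a concave quadratic in $Z\ge 0$, I take the unconstrained maximum at $Z=\tfrac{1}{2}\sqrt{M(1-S)}$ to obtain
\[
f(Y,Z,S,M)\le \phi(M,S):=2M-M^{2}-\tfrac{1}{2}+\tfrac{M(1-S)}{4}.
\]
The nonnegativity constraint $Y\ge 0$ may be dropped here, since removing a constraint only enlarges the maximum and hence can only strengthen the target upper bound.

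Next, I observe that $\phi(M,S)$ is linear and nonincreasing in $S$ (with slope $-M/4\le 0$), so at any maximum $S$ attains its lower bound. By (a) and (b), this lower bound is $\max(0,2M-1)$, which naturally splits the analysis into two cases.

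For $M\le\tfrac{1}{2}$, set $S=0$, obtaining $\phi(M,0)=-M^{2}+\tfrac{9M}{4}-\tfrac{1}{2}$. Its unconstrained critical point $M=9/8$ lies outside $[0,\tfrac{1}{2}]$ and the quadratic is increasing there, so the maximum on this interval is $\phi(\tfrac{1}{2},0)=\tfrac{3}{8}<\tfrac{13}{24}$. For $M\ge\tfrac{1}{2}$, set $S=2M-1$, so that $1-S=2(1-M)$ and $\phi(M,2M-1)=\tfrac{5M}{2}-\tfrac{3M^{2}}{2}-\tfrac{1}{2}$. This concave quadratic is maximized at $M=5/6$, giving value $\tfrac{13}{24}$; the corresponding $S=2/3$ satisfies the upper bound $M\le(1+S)/2=5/6$ with equality, so the point is feasible. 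Combining the two cases yields $f\le\tfrac{13}{24}$. The main conceptual step is the first one: recognizing that the nonnegativity of $Y$ may be safely relaxed so that the optimization in $Z$ is truly unconstrained and admits a clean closed form; after that the remaining analysis is a standard one-dimensional maximization on each side of the kink at $M=\tfrac{1}{2}$.
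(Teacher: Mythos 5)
Your proposal is correct and follows essentially the same route as the paper: eliminate $Y$ via constraint (c), maximize the resulting concave quadratic in $Z$, push $S$ to its lower bound $\max(0,2M-1)$, and optimize the two resulting quadratics in $M$ to get $3/8$ and $13/24$. The only (harmless) difference is that you relax $Y\ge 0$ and take the unconstrained maximum in $Z$, whereas the paper instead verifies that the optimizing $Z_{0}(M)$ is feasible; both are valid for the upper bound.
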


\begin{proof}
For fixed values of $S$, $M$ and $Z$, the function $f$ is monotone in $Y$.  Therefore, we have to choose $Y$ as large as possible, which, according to the last constraint, implies that $Y = -M^{2} + 2M - \frac12 - Z^{2}$. Also the right hand side of constraint (c) is non-negative if and only if $1-\sqrt{2}/2 \le M\le 1+\sqrt{2}/2$, so $1-\sqrt{2}/2\le M\le 1$.

For fixed values of $Z$ and $M$ the target function $f$ decreases with $S$.  Hence we need to choose $S$ as small as possible.  The constraints imply $S\ge\max\{0,2M-1\}$, so we consider the following two cases.

If $M\le 1/2$, then $S=0$.  In this case, we need to optimize
\begin{equation*}
  f(Z,M) = -M^{2} + 2M - \frac12 - Z^{2} + \sqrt{M}\cdot Z
\end{equation*}
subject to
\begin{enumerate}
\item $1 - \sqrt{2}/2\le M \le 1/2$,
\item $Z^{2} \le -M^{2} + 2M - \frac12$.
\end{enumerate}
The target function $f$ is quadratic in $Z$ with maximum at $Z_{0}(M) =\sqrt{M}/2$.
Observe that
\begin{equation*}
  f(Z_{0}(M),M) = -M^{2} + \frac94M - \frac12.
\end{equation*}
This function is quadratic in $M$, with maximum at $M=\frac98>\frac12$.  Therefore, it is maximized by the
largest feasible~$M=1/2$.  In total,
\begin{equation*}
  f(Z,M) \le f(Z_{0}(M),M) \le f(Z_{0}(1/2),1/2) = \frac38
\end{equation*}
for all feasible values of~$(Z,M)$.  Finally, 
$(Z_{0}(1/2),1/2)$ is feasible, since
\begin{equation*}
  Z_{0}(1/2)^{2} = \frac18 < \frac14 = -\frac14 + 1 - \frac12.
\end{equation*}

For the second case, suppose that $M\ge 1/2$.  Then $S=2 M-1$, and we need to optimize
\begin{equation*}
  f(Z,M) = -M^{2} + 2M - \frac12 - Z^{2} + \sqrt{M}\cdot Z\cdot \sqrt{2(1-M)}
\end{equation*}
subject to
\begin{enumerate}
\item $1/2\le M \le 1$,
\item $Z^{2} \le -M^{2} + 2M - \frac12$.
\end{enumerate}

Again, $f$ is quadratic in~$Z$, with maximum at~$Z_{0}(M) = \sqrt{M(1-M)/2}$.  To see that
$(Z_{0}(M),M)$ is feasible, we have to check that
\begin{align*}
  0 \le -M^{2} + 2M - \frac12 - Z_{0}(M)^{2}
  = -\frac12M^{2} + \frac32M - \frac12.
\end{align*}
The right hand side is a quadratic polynomial with zeros at $(3-\sqrt{5})/2<1/2$ and~$(3+\sqrt{5})/2>1$, which proves
that $(M,Z_{0}(M))$ satisfies all constraints.

Therefore, we need to maximize the quadratic function
\begin{equation*}
  f(M) = -M^{2} + 2M - \frac12 + \frac12M(1-M) = -\frac32M^{2} + \frac52M - \frac12
\end{equation*}
with $1/2\le M\le 1$.  The maximum is at $M=5/6$, where the value is
\begin{equation*}
  f(5/6) = - \frac{75}{72} + \frac{25}{12} - \frac12 = \frac{13}{24} = 0.541\overline{6}.\qedhere
\end{equation*}
\end{proof}

\begin{proof}[Proof (of Theorem \ref{mainthm})]
By Lemma \ref{lemmaD} and Lemma \ref{lemmaf},
\begin{align*}
B(G) &= \sum _{i=1}^{n-1} D_i \\
&\le \sum _{i:i \text{ single}} \min(m, i) + \sqrt{m} \sqrt{\sum _{i:i \text{ multiple}}\min(m,i)} \sqrt{\sum _{i:i \text{ multiple}} n_i} \\
&= g(y,z,s,m) \\
&\le \left( 1 + o(1) \right)f(Y,Z,S,M) n^2 \\
&\le \left( 1 + o(1) \right) \frac{13}{24}n^2,
\end{align*}
implying that
\begin{equation*}
\frac{|A(G)|}{\binom{n}{2}} = (1+o(1))\frac{B(G)}{n^2} \le \left( 1 + o(1) \right) \frac{13}{24}.\qedhere
\end{equation*}
\end{proof}
\section*{Acknowledgments}
We acknowledge that it was Aaron Dutle who first gave a non-trivial upper bound $(1-1/e +o(1))n^2=0.63212...n^2$ for the number of non-zero entries in a JDV. His proof was somewhat similar to
Subsection~\ref{sec:4} but missed the symmetry that is key to that subsection. All authors except the second author were supported in part by the U.S.\ Air Force Office of Scientific Research (AFOSR) and the Defense Advanced Research Projects Agency (DARPA). The last author was supported in part by the NSF DMS contracts no.\ 1300547 and 1600811. The authors are also grateful to Sonja Petrovi\'{c} and Despina Stasi for helpful discussions.

\bibliographystyle{apalike}
\bibliography{bib}
\end{document}